\documentclass{amsart}
\usepackage[utf8]{inputenc}

\usepackage{graphicx}
\usepackage{tikz-cd}
\usepackage{hyperref}
\usepackage{cite}
\usepackage{todonotes}

\newtheorem{theorem}{Theorem}[section]
\newtheorem{theoremA}{Theorem}
\newtheorem{lemma}[theorem]{Lemma}

\theoremstyle{definition}

\theoremstyle{remark}
\newtheorem{remark}[theorem]{Remark}

\numberwithin{equation}{section}


\newcommand{\supp}{\text{supp}\,}

\newcommand{\Isom}{\text{Isom}\,}
\newcommand{\nin}{\noindent}



\begin{document}

\title[Equilibrium measures for certain isometric extensions]{Equilibrium measures for certain isometric extensions of Anosov systems\\
}

\author{Ralf Spatzier}
\thanks{The first author was supported in part by NSF Grant DMS--1307164 and a research professorship at MSRI}

\author{Daniel Visscher}
\thanks{The second author was supported by NSF RTG grant 1045119}


\date{}



\begin{abstract}
We prove that for the frame flow on a negatively curved, closed manifold of odd dimension other than 7, and a H\"older continuous potential that is constant on fibers, there is a unique equilibrium measure. We prove  a similar result for automorphisms of the Heisenberg manifold fibering over the torus.  Our methods also give an alternate proof of Brin and Gromov's result on the ergodicity of these frame flows. 
\end{abstract}

\maketitle

\section{Introduction}

\textit{Topological entropy} is a measure of the complexity of a dynamical system on a compact topological space. It records the exponential growth rate of the amount of information needed to capture the system for time $t$ at a fine resolution $\epsilon$, as $t \rightarrow \infty$ and $\epsilon \rightarrow 0$. As a topological invariant, it can be used to distinguish between topologically different dynamical systems. Positive topological entropy is also used as an indicator of chaos. One may wish to be selective of the information in the system to include, however; such a selection can be encoded by a probability measure. Given an invariant probability measure, \textit{measure theoretic entropy} computes the complexity of the dynamical system as seen by the measure. This, of course, depends on the measure chosen. These two ways of computing entropy are related by the \textit{variational principle}, which states that the topological entropy is the supremum of measure-theoretic entropies over the set of $f$-invariant measures. The variational principle provides a tool for picking out distinguished measures---namely, those that maximize the measure-theoretic entropy. Such measures (if they exist) are called \textit{measures of maximal entropy}.

\textit{Pressure} is a generalization of entropy which takes into account a weighting of the contribution of each orbit to the entropy by a H\"older continuous potential function. In the case that the potential is identically zero, the pressure is just the entropy of the system. The variational principle also applies to topological and measure theoretic pressure and implies that for a given dynamical system, any H\"older continuous potential function determines a set (possibly empty) of invariant measures that maximize the measure theoretic pressure. Such measures are called \textit{equilibrium measures}. As is well known, work of Newhouse and Yomdin shows that equilibrium measures for continuous functions and $C^\infty$ dynamics always exists \cite{newhouse89}.

In the 1970's, Bowen and Ruelle produced a set of results considering entropy, pressure, and equilibrium measures for Axiom A and, in particular, Anosov diffeomorphisms and flows~\cite{Bowen, Bowen-Ruelle}. A central result is the following: given a transitive Anosov diffeomorphism or flow of a compact manifold and a H\"older continuous potential function, there exists an equilibrium measure and it is unique. Moreover, this measure is ergodic and has full support. While this theorem applies broadly to Anosov dynamics, there is no general theory for partially hyperbolic systems. Results are limited to specific sets of diffeomorphisms and potentials (quite often only the zero potential). In the main part of this paper, we study equilibrium measures for the (full) frame flow $F^t$ on a negatively curved, closed manifold $M$ and a particular class of potentials. Recall that $F^t$ is a flow on the positively oriented orthonormal frame bundle $FM$, which factors over the unit tangent bundle $SM$; see Section~\ref{sec:prelim} for definitions. When the dimension of the underlying manifold $M$ is at least $3$, such frame flows are (non-Anosov) partially hyperbolic flows. Indeed, the orthonormal frame bundle fibers non-trivially over the unit tangent bundle if 
$\dim (M) \geq 3$, and the frame flow is isometric along the fibers. This isometric behavior along a foliation will play an important role in proving the following result.
\begin{theoremA}\label{main}
Let $M$ be a closed, oriented, negatively curved $n$-manifold, with $n$ odd and not equal to $7$. For any H\"older continuous potential $\varphi: FM \rightarrow \mathbb{R}$ that is constant on the fibers of the bundle $FM \rightarrow SM$, there is a unique equilibrium measure for $(F^t,\varphi)$. It is ergodic and has full support.
\end{theoremA}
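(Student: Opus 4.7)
The plan is to leverage the principal $SO(n-1)$-bundle structure $\pi: FM \to SM$ to reduce the problem to the geodesic flow $g^t$, and then handle uniqueness on the fibers via transitivity of a holonomy group.

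Since $\varphi$ is constant on fibers, it descends to a Hölder potential $\bar\varphi$ on $SM$ with $\varphi = \bar\varphi \circ \pi$. The geodesic flow $g^t$ is a transitive (in fact topologically mixing, being contact) Anosov flow, so by Bowen--Ruelle there is a unique equilibrium measure $\bar\mu$ for $(g^t,\bar\varphi)$, and it is ergodic with full support. Because $F^t$ acts by isometries along the compact fibers, the fiber entropy vanishes and the Abramov--Rokhlin formula gives $h_\mu(F^t) = h_{\pi_*\mu}(g^t)$ for every $F^t$-invariant $\mu$; combined with $\int \varphi\, d\mu = \int \bar\varphi\, d\pi_*\mu$, this yields $P(F^t,\varphi) = P(g^t,\bar\varphi)$ and shows that the equilibrium measures for $(F^t,\varphi)$ are precisely the $F^t$-invariant probability measures on $FM$ projecting to $\bar\mu$. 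Existence then follows by lifting $\bar\mu$ via fiberwise Haar measure; this lift is $F^t$-invariant because $F^t$ commutes with the right $SO(n-1)$-action.

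For uniqueness, given any $F^t$-invariant lift $\mu$ of $\bar\mu$, I would disintegrate $\mu$ along fibers to obtain a measurable family $\{\mu_v\}_{v\in SM}$ of fiber probability measures. Using $F^t$-invariance together with absolute continuity of the stable and unstable foliations for $g^t$ (a Hopf-type argument), these fiber measures must be invariant under the transitivity group of the frame flow --- the closed subgroup of $SO(n-1)$ generated by stable and unstable holonomies around closed orbits. By the Brin--Gromov theorem, this group is all of $SO(n-1)$ when $n$ is odd and $n \neq 7$; hence $\mu_v$ is Haar a.e.\ and $\mu$ coincides with the Haar lift. Ergodicity and full support then follow from uniqueness combined with the corresponding properties of $\bar\mu$.

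The main obstacle is this last step: executing a Hopf-style argument in the isometric (non-hyperbolic) fiber direction to force the conditional measures to be holonomy-invariant, and then invoking the non-trivial topological input that the transitivity group is all of $SO(n-1)$. The dimensional restriction enters precisely here, since $SO(6)$ admits proper closed subgroups --- namely $SU(3)$ and $U(3)$ --- that act transitively on $S^5$, obstructing the transitivity argument exactly when $n = 7$.
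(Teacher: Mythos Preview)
Your reduction to the base via the isometric-fiber entropy formula matches the paper's Lemma~\ref{embase}, and both proofs end at the same Brin--Gromov topological obstruction. The route to uniqueness, however, is genuinely different. You invoke the classical Brin--Pesin transitivity group: show the fiber conditionals $\mu_v$ are invariant under stable and unstable holonomy, then quote Brin--Gromov to identify that group as $SO(n-1)$. The paper instead defines, for each ergodic lift $\mu$, the group $G_x^\mu \subset \Isom V_x$ of fiber isometries preserving $\mu_x$ and commuting with the right $SO(n-1)$-action; a Lusin--Birkhoff argument (Lemma~\ref{clsubgrpsupp}) shows $G_x^\mu$ acts transitively on $\supp \mu_x$. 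If $G_x^\mu$ is a proper subgroup $H$, the assignment $x \mapsto \supp \mu_x$ is a measurable $g^t$-invariant section of $FM/H \to SM$, which a Liv\v{s}ic-type regularity result (Theorem~\ref{sectionregularity}) upgrades to a continuous section, giving a reduction of structure group and the topological contradiction. The paper's point is that constructing $G_x^\mu$ uses only ergodicity of the base flow, with hyperbolicity entering solely through the Liv\v{s}ic step; your approach builds the holonomy group directly from the hyperbolic structure.

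One correction to your sketch: ``absolute continuity of the stable and unstable foliations'' is not the right input. That statement concerns Lebesgue measure and is what drives the Hopf argument for the Liouville lift, but your base measure $\bar\mu$ is a general Gibbs state, typically singular with respect to volume. What makes the holonomy-invariance argument work for the conditionals $\mu_v$ is that $\bar\mu$ has \emph{local product structure} (Bowen--Ruelle, Leplaideur): this guarantees that for $\bar\mu$-a.e.\ $v$, a full set of points on $W^{ss}_{loc}(v)$ (with respect to the $\bar\mu$-conditional on that leaf) are Birkhoff-generic, so the Lusin-plus-recurrence argument goes through. With that substitution your outline is sound.
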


Let us make a couple of comments on the assumptions of this theorem. First, the restriction on the dimension of the manifold in this theorem is due to a topological argument using the non-existence of certain transitive actions on spheres. Second, while the condition that the potential function is constant on the fibers is highly restrictive, it does apply to any H\"older function pulled back from a function on the unit tangent bundle. In particular, the theorem applies to the measure of maximal entropy, which is the equilibrium state for the zero potential, or equivalently in this case, for the unstable Jacobian. Our assumption on the potential makes the measure amenable to the methods used in the proof (namely, the projected measure has local product structure). We believe that the theorem should hold for a more general class of functions, but the problem becomes much more difficult.

The methods of the proof also apply in other situations,  for instance to certain automorphisms of a nilmanifolds 
that factor over an Anosov map. Here is a sample result. Let $Heis$ be the  3-dimensional Heisenberg group, and $Heis(\mathbb{Z})$ its integer lattice.  Let $M = Heis/ Heis (\mathbb{Z})$ be the Heisenberg manifold. Note that $M$ naturally fibers over the 2-torus  $\mathbb{T}^2$ by factoring by the center of $Heis$.  We hope to pursue these matters in greater detail in the future.  

\begin{theoremA}   \label{heisenberg}
Let $M$ be the Heisenberg manifold, and 
 $f$  a partially hyperbolic automorphism of $M$ such that the induced action on the base torus is Anosov and the action on the fibers is isometric.  
 Then for any H\"older continuous potential $\varphi: M \rightarrow \mathbb{R}$ that is constant on the fibers of the canonical projection map $M \to \mathbb{T}^2$, there is a unique equilibrium measure for $(f, \varphi)$. It is ergodic and has full support.
\end{theoremA}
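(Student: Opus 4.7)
The plan is to reduce Theorem~\ref{heisenberg} to the Bowen-Ruelle equilibrium theory on the base torus and then show that the lift of the base equilibrium measure is unique. Since $f$ is a Lie group automorphism it preserves the (characteristic) center of the Heisenberg group, so the induced map $\bar f$ on $\mathbb{T}^2$ is a hyperbolic toral automorphism. The potential $\varphi$, being constant on $\pi$-fibers, descends to a H\"older function $\bar\varphi$ on $\mathbb{T}^2$, and Bowen-Ruelle furnishes a unique equilibrium measure $\mu_0$ for $(\bar f, \bar\varphi)$, which is ergodic and of full support. Combining the Ledrappier-Walters relative variational principle with the facts that the fibers are circles on which $f$ acts isometrically (so relative fiber entropy vanishes) and that $\varphi$ is constant on fibers, one obtains $P(f,\varphi)=P(\bar f, \bar\varphi)$ and $\pi_*\mu = \mu_0$ for every equilibrium measure $\mu$ of $(f,\varphi)$.

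To pin down $\mu$ itself I would disintegrate $\mu = \int_{\mathbb{T}^2} \mu_x\, d\mu_0(x)$ along the circle fibers and show that the conditionals $\mu_x$ are Haar on $S^1$. The Anosov equilibrium state $\mu_0$ has local product structure, and the stable and unstable holonomies of $\bar f$ lift to holonomies of the corresponding one-dimensional strong stable and unstable foliations of $f$ on $M$. In the homogeneous Heisenberg setting these lifted holonomies act on the circle fibers by rotations. A standard Gibbs and local-product-structure argument, analogous to the one employed for the frame flow in Theorem~\ref{main}, then shows that the conditionals $\mu_x$ must be invariant $\mu_0$-a.e. under the group generated by all such rotations.

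The core remaining step is to verify that this rotation group acts minimally on $S^1$, which I would carry out by computing the commutator of an unstable and a stable holonomy: traversing the boundary of a small $s$-$u$ rectangle in $M$ produces a pure fiber translation whose length is proportional to the symplectic area spanned by the stable and unstable eigendirections of $\bar f$. Since those eigendirections have irrational slope, these commutator holonomies generate a dense subgroup of $S^1$, and invariance under a dense group of rotations forces $\mu_x$ to be Haar. The measure $\mu$ is thereby uniquely determined, and ergodicity together with full support are inherited from $\mu_0$ and the transitivity of the holonomy group on the fibers. I expect the main obstacle to be the extraction of holonomy-invariance of the conditionals from the equilibrium property, as this requires adapting the Anosov Gibbs argument to the isometric extension; by contrast, the irrationality of the commutator rotations is a direct algebraic consequence of the Heisenberg commutation relation and the irrationality of the Anosov eigendirections.
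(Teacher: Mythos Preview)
Your outline is viable but follows a genuinely different route from the paper's. The paper does \emph{not} argue via $su$-holonomy invariance of the fiber conditionals. Instead it transports verbatim the machinery built for the frame flow: for an ergodic equilibrium measure $\mu$, Lemma~\ref{clsubgrpsupp} identifies $\supp\mu_x$ with an orbit of the group $G_x^\mu\le\Isom S^1$, giving the dichotomy $G_x^\mu=S^1$ or $G_x^\mu$ finite. In the finite case one obtains a measurable $f$-invariant section of $M\to\mathbb{T}^2$ (or of a finite cover), which the Liv\v{s}ic regularity Theorem~\ref{sectionregularity} upgrades to a continuous section. That would force $\pi_1(\bar M)\cong\mathbb{Z}\times\mathbb{Z}^2$ for a finite cover $\bar M$ of $M$, impossible since $Heis(\mathbb{Z})$ contains no rank-$3$ abelian subgroup. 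Thus the paper's obstruction is purely topological, whereas yours is dynamical/algebraic via the Heisenberg commutator.

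Two remarks on your version. First, your claim that holonomy invariance of the $\mu_x$ follows ``analogous to the one employed for the frame flow in Theorem~\ref{main}'' mischaracterizes that proof: Theorem~\ref{main} goes through $G_x^\mu$ and Liv\v{s}ic regularity, not through $su$-holonomy. What you actually need is an invariance principle in the sense of Ledrappier and Avila--Viana (zero fiber exponents plus local product structure of $\mu_0$ force holonomy invariance of the disintegration); this is available and makes your argument run, but it is an external import, not an analogy with the present paper. Second, irrationality of the eigendirections is not the mechanism: the commutator of a length-$t$ unstable holonomy with a length-$s$ stable holonomy is the center element $ts\,\omega(v_u,v_s)$, and since $t,s$ range over $\mathbb{R}$ with $\omega(v_u,v_s)\neq 0$, these commutators already fill out all of $S^1$. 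The upshot is that your approach trades the Liv\v{s}ic-plus-$\pi_1$ machinery for an invariance principle plus an explicit Heisenberg computation; the paper's approach is more uniform with the frame-flow case and rests on a soft topological obstruction, while yours is more hands-on and specific to the nilmanifold structure.
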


We remark that the equilibrium states of Theorems~\ref{main} or \ref{heisenberg} are in one-to-one correspondence with cohomology classes in the set of potential functions constant on fibers. Indeed, the equilibrium measure is uniquely determined by an equilibrium measure for the Anosov base dynamics, where this is a classical result.

Recent study of the existence and uniqueness of equilibrium measures for partially hyperbolic diffeomorphisms and flows focuses on examples. The following is a list of results most pertinent to the present paper. The existence and uniqueness of a measure of maximal entropy for diffeomorphisms of the 3-torus homotopic to a hyperbolic automorphism was shown by Ures in~\cite{Ures}. Rodriguez-Hertz, Rodriguez-Hertz, Tahzibi, and Ures proved existence and uniqueness of the measure of maximal entropy for 3-dimensional partially hyperbolic diffeomorphisms with compact center leaves when the central Lyapunov exponent is zero, and multiple measures of maximal entropy when the central Lyapunov exponent is non-zero~\cite{RHRHTU12}. Climenhaga, Fisher, and Thompson showed existence and uniqueness of equilibrium measures under conditions on the potential function for certain derived-from-Anosov diffeomorphisms of tori~\cite{Climenhaga-Fisher-Thompson}. Finally, Knieper proved for geodesic flows in higher rank symmetric spaces that the measure of maximal entropy is again unique \cite{Knieper05} with support a submanifold of the unit tangent bundle on which the geodesic flow is partially  hyperbolic.  We note that he also proved uniqueness of the measure of maximal entropy for the geodesic flow on closed rank 1 manifolds of nonpositive curvature \cite{Knieper}.  These are non-uniformly hyperbolic but not usually partially hyperbolic.   

Bowen and Ruelle studied equilibrium measures for uniformly hyperbolic diffeomorphisms and flows via expansivity and specification. These results have been extended to weak versions of expansivity and specification by Climenhaga and Thompson in~\cite{Climenhaga-Thompson}, and used by them and Fisher in~\cite{Climenhaga-Fisher-Thompson}.
To our knowledge, outside of measures of maximal entropy, Theorems~\ref{main} and \ref{heisenberg} are the first results about the uniqueness of equilibrium measures for partially hyperbolic systems for which not even a weak form of specification holds.

The main method of proof is to combine ideas from measure rigidity of higher rank abelian actions  with ideas from the proof of Liv\v{s}ic' theorem on measurable cohomology of H\"older cocycles \cite{livsic}.  More precisely, we consider conditional measures along the central foliations.  The support of a conditional measure generates  limits of isometries along central leaves which act transitively on this support.  Thus, for frame flows, one arrives at a dichotomy for conditional measures: they are either invariant under the action of $SO(n-1)$ or else under a proper subgroup.  In the first case, the problem is reduced to understanding the equilibrium state projected to the unit tangent bundle where the projected flow is Anosov and classical methods apply.  In the second case, we get an invariant measurable section of an associated bundle.  As discussed in \cite{GS97}, the ideas of Liv\v{s}ic then show that such sections have to be continuous and even smooth, giving us a reduction of structure group of the frame bundle.  In the case that $n$ is odd and not $7$, the latter case gives a contradiction, as shown by Brin and Gromov.  Similar considerations and topological restrictions apply in the case of the Heisenberg manifold in Theorem \ref{heisenberg}.

The idea to study invariant measures via their conditional measures along isometric foliations was introduced in \cite{KS96}, and used repeatedly in other works (e.g., \cite{EKL06,EL15,LS04,AVW}). In particular, Lindenstrauss and Schmidt analyzed invariant measures for partially hyperbolic automorphisms of tori and more general compact abelian groups in \cite{LS04,LS05}. They showed that for measures singular with respect to Haar measure, the conditional measures along central foliations must be finite. 
We note that there are many such measures, however.
Indeed, their measure-theoretic entropies form a dense set in the interval between 0 and the topological entropy $h_{top} (f)$ \cite{Sun2012}, and they conjecturally fill out the entire interval $[0,h_{top}(f)]$.
Equilibrium measures are of course much more special. While the situation is classical and well understood for hyperbolic toral automorphisms, the non-expansive case is unclear: are equilibrium states unique for a given potential function?  How many equilibrium states are there in total?  
In contrast with general invariant measures for partially hyperbolic toral automorphisms, we find for the Heisenberg manifold and a specific family of potential functions that equilibrium measures are unique and are uniquely determined by the cohomology class of the potential function.  
Finally, let us remark that Avila, Viana, and Wilkinson studied conditional measures on center leaves and their invariance under stable and unstable holonomy in their work on measure rigidity of perturbations of the time 1 map of the geodesic flow of a hyperbolic surface \cite{AVW}.

In Section~\ref{sec:reproof}, we  adapt our methods to give a different, softer proof of the following classical result by Brin and Gromov on the ergodicity of certain frame flows. 

\begin{theoremA}[Brin-Gromov, \cite{Brin-Gromov}] \label{thm:Brin-Gromov}
Let $M$ be an odd dimensional closed, oriented manifold of negative sectional curvature and dimension $n \neq 7$. Then the frame flow is ergodic.  
\end{theoremA}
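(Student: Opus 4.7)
The plan is to adapt the argument for Theorem~\ref{main} to Liouville measure $\mu_L$ on $FM$. Let $A \subset FM$ be an $F^t$-invariant measurable set with $0 < \mu_L(A) < 1$; I aim for a contradiction. Note that $\mu_L$ disintegrates over Liouville measure $\nu$ on $SM$ into normalized Haar measures on the fibers $F_vM \cong SO(n-1)$. Since the geodesic flow on $SM$ is Anosov and hence $\nu$-ergodic, invariance of $A$ forces $\nu(\pi(A)) = 1$, where $\pi: FM \to SM$ is the projection; consequently the conditional measures $\eta_v$ of $\mu_L|_A$ on the fibers are, for $\nu$-a.e.\ $v$, nonzero and absolutely continuous with respect to Haar.

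Next I would exploit the isometric action along fibers. Pulling back $\eta_v$ along $F^t$-orbits and along stable and unstable holonomies — which for the frame flow coincide with parallel transport along the corresponding leaves and hence act isometrically on fibers — produces a family of isometries of each fiber preserving $\eta_v$. Exactly as in the proof of Theorem~\ref{main}, taking the closed group $H_v \leq SO(n-1)$ generated by these isometries shows that the support of $\eta_v$ is a single $H_v$-coset. A Liv\v{s}ic-type argument along the lines of \cite{GS97} then upgrades the a.e.-defined measurable map $v \mapsto H_v$ to a continuous, and eventually smooth, $F^t$-invariant reduction of the structure group of the principal bundle $FM \to SM$ from $SO(n-1)$ to a closed subgroup $H$.

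The dichotomy that closes the proof is as in Theorem~\ref{main}. Either $H = SO(n-1)$, in which case $\eta_v$ is Haar for $\nu$-a.e.\ $v$ and $A$ has full $\mu_L$-measure, contradicting $\mu_L(A) < 1$; or else $H$ is a proper closed subgroup of $SO(n-1)$. In the second case one obtains an induced reduction of the sphere bundle, hence a proper closed subgroup of $SO(n-1)$ that acts transitively on $S^{n-2}$. This is topologically impossible for $n$ odd and $n \neq 7$ by the classification of compact transitive group actions on spheres (the same topological input used by Brin and Gromov). The hardest step — and the one that requires real work in Section~\ref{sec:reproof} — is the Liv\v{s}ic-type regularization that turns the a.e.-invariant subgroup-valued map $v \mapsto H_v$ into a genuinely continuous reduction of structure group; it is precisely here that the measure-rigidity viewpoint of Theorem~\ref{main} yields a softer alternative to the original holonomy-transitivity-group analysis of \cite{Brin-Gromov}.
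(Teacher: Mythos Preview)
Your outline lands on the same dichotomy and the same topological endgame as the paper, but it diverges from the paper's actual argument in two places that matter.

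First, the paper does not work with $\mu_L|_A$ for an invariant set $A$. Instead it takes an arbitrary \emph{ergodic component} $\nu$ of the smooth measure $\mu$ and studies the groups $G_x^\nu$ (Lemma~\ref{erglemma}). This is not cosmetic: your claim that ``the support of $\eta_v$ is a single $H_v$-coset'' is exactly Lemma~\ref{clsubgrpsupp}, and that lemma needs ergodicity of the measure whose conditionals you are analyzing. The restriction $\mu_L|_A$ need not be ergodic, so the single-coset conclusion does not follow directly; passing to an ergodic component is what makes the step go through. (Note also that once you pass to an ergodic component you lose the absolute-continuity of the fiber conditionals, so that observation does no work in the argument.)

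Second, the paper deliberately avoids stable/unstable holonomies. The isometries in $G_x^\nu$ are produced purely as limits $F^{t_i}|_{\pi^{-1}(x)}$ along return times of the \emph{base} geodesic flow, using only the ergodicity of $g^t$ on $SM$; this is the advertised ``softer'' replacement for the Brin--Pesin transitivity group. Relatedly, the Liv\v{s}ic step is not applied to the subgroup-valued map $v \mapsto H_v$. The paper first shows $G_x^\nu$ is the \emph{same} subgroup $H \lneq SO(n-1)$ for $\mu_L$-a.e.\ $x$ (this is a direct ergodicity argument, since $G_{g^tx}^\nu = F^t G_x^\nu F^{-t}$ and $F^t$ commutes with the structure group), and only then invokes Theorem~\ref{sectionregularity} to upgrade the measurable section $x \mapsto \supp \nu_x \in FM/H$ to a continuous one. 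Your phrasing ``upgrades the a.e.-defined measurable map $v \mapsto H_v$ to a continuous \ldots\ reduction'' conflates these two separate steps and misidentifies the object being regularized.
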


\noindent We remark that Brin and Karcher \cite{Brin-Karcher} proved ergodicity of the frame flow in even dimensions $\neq 8$ under pinching assumptions on the curvature. These results were extended under pinching restrictions on the curvature to dimensions $7$ and $8$ by Burns and Pollicott in \cite{Burns-Pollicott}. Our approach does not apply to such results, since those authors use pinching to control Brin-Pesin groups.

The novelty in our approach lies in replacing the Brin-Pesin group used in the works cited above by our groups $G_x^\mu$. While the important properties of the Brin-Pesin groups rely on the hyperbolicity of the geodesic flow, the relevant properties of $G_x^\mu$ rely only on the ergodicity of the geodesic flow. Our need for hyperbolicity appears in the use of a Liv\v{s}ic regularity theorem (Theorem~\ref{sectionregularity}) that requires that the dynamics on the base be Anosov.

\vspace{1em}

\noindent {\em Acknowledgements:}  We thank A. Wilkinson, V. Climenhaga, and T. Fisher for discussions about this project.

\section{Preliminaries} \label{sec:prelim}

We first review some basic definitions and results. 

\subsection{Frame flow} 
Let $M$ be a closed, oriented $n$-dimensional manifold with Riemannian metric. Let $SM = \{ (x,v) : x \in M, v \in T_xM, \| v \| = 1\}$ denote the unit tangent bundle, and let $FM = \{ (x;v_0,v_1,\ldots,v_{n-1}) : x \in M, v_i \in T_xM \}$, where the $v_i$ form a positively oriented orthonormal frame at $x$, be the frame bundle. The metric induces a geodesic flow $g^t:SM \rightarrow SM$, defined by $g^t(x,v) = (\gamma_{(x,v)}(t),\dot{\gamma}_{(x,v)}(t))$, where $\gamma_{(x,v)}$ is the unique geodesic determined by the vector $(x,v)$. The metric also induces a frame flow $F^t:FM \rightarrow FM$; defined by 
\[
F^t(x,v_0,v_1,\ldots,v_{n-1}) = (g^t(x,v_0),\Gamma_\gamma^t(v_1),\ldots,\Gamma_\gamma^t(v_{n-1})),
\]
where $\Gamma_\gamma^t$ denotes parallel transport along the geodesic $\gamma_{(x,v_0)}$. The frame bundle is a fiber bundle over $SM$, with structure group $SO(n-1)$ acting on the frames by rotations that keep the vector $v_0$ fixed. Hence, we have the following commuting diagram:
\[
\begin{tikzcd}
FM \arrow{r}{F^t} \arrow{d}{\pi} & FM \arrow{d}{\pi} \\
SM \arrow{r}{g^t} & SM
\end{tikzcd}
\]
The frame flow preserves a natural smooth measure $\mu = \mu_L \times \lambda_{SO(n-1)}$, where $\mu_L$ is (normalized) Liouville measure on the unit tangent bundle, and $\lambda_{SO(n-1)}$ is Haar measure on $SO(n-1)$. Note that $\pi_* \mu = \mu_L$, and $\mu_L$ is preserved by the geodesic flow.

\subsection{Partial hyperbolicity}
A flow $f^t: X \rightarrow X$ on a manifold $X$ with a Riemannian metric is called {\em partially hyperbolic} if the tangent bundle splits into three subbundles $TX = E^s \oplus E^c \oplus E^u$, each invariant under the flow, such that vectors in $E^s$ are (eventually) exponentially contracted by the flow, the vectors in $E^u$ are (eventually) exponentially expanded by the flow, and any contraction (resp. expansion) of vectors in $E^c$ is dominated by that of vectors in $E^s$ (resp. $E^u$). A flow is Anosov if these bundles can be chosen with $E^c=\langle \dot{f} \rangle$.

Some, but not necessarily all, distributions made up of these subbundles are integrable. For a point $x \in X$, the strong stable and strong unstable manifolds are defined by
\[
W^{su}(x) = \{ y \in X \, | \, d(f^{-t}x, f^{-t}y) \rightarrow 0 \text{ as } t \rightarrow +\infty \}
\]
\[
W^{ss}(x) = \{ y \in X \, | \, d(f^{t}x, f^{t}y) \rightarrow 0 \text{ as } t \rightarrow +\infty \}.
\]
These are $C^\infty$-immersed submanifolds of $X$, with $T_xW^{su}(x) = E^u(x)$ and $T_xW^{ss}(x) = E^s(x)$. The strong unstable (resp. stable) leaves form a foliation of $X$, which we denote $W^{su}$ (resp. $W^{ss}$).

If the flow is Anosov, the center bundle consists only of the flow direction and we can define weak-unstable (resp. weak-stable) manifolds through $x$ by
\[
W^u(x) = \bigcup_{t \in \mathbb{R}} W^{su}(f^tx)
\]
\[
W^s(x) = \bigcup_{t \in \mathbb{R}} W^{ss}(f^tx)
\]
Then $T_xW^u(x) = E^u(x) \oplus E^c(x)$ and $T_xW^s(x) = E^s(x) \oplus E^c(x)$. In contrast, the center bundle of a partially hyperbolic flow may be non-integrable, and the existence of weak-stable and weak-unstable manifolds is not guaranteed.

In our setting, the geodesic flow on a manifold of negative sectional curvature is an Anosov flow, while the frame flow is an example of a partially hyperbolic flow with center bundle of dimension $1 + \dim SO(n-1)$. The frame flow actually has a stronger property that implies partial hyperbolicity: it acts \textit{isometrically} on the center bundle, with respect to any bi-invariant metric on $SO(n-1)$.

\subsection{Pressure}
Given a function $\varphi: X \rightarrow \mathbb{R}$ (often called a \textit{potential}), consider the accumulation of $\varphi$ along orbits of $f^t$ given by $\varphi_T(x) = \int_0^T \varphi(f^t(x))\, dt$. Let $B(x,\epsilon) = \{ y \in X \, | \, d(x, y) < \epsilon \}$, and let 
\[
B(x,\epsilon,T) = \{ y \in X ~:~ d(f^t x, f^t y ) < \epsilon ~\text{for}~ 0 \leq t \leq T\}.
\]
If $\bigcup_{x \in E} B(x,\epsilon,T) = X$ for some set $E \subset X$, then $E$ is called $(T,\epsilon)$-spanning. Then the value 
\[
S(f,\varphi,\epsilon,T) = \inf \{ \sum_{x \in E} e^{\varphi_T(x)} ~:~ \bigcup_{x \in E} B(x,\epsilon,T) = X \}
\]
gives the minimum accumulation of $e^\varphi$ for time $T$ of a $(T,\epsilon)$-spanning set. The \textit{topological pressure} of $(f^t,\varphi)$ is the exponential growth rate (as $T\rightarrow \infty$) of this quantity as the resolution $\epsilon$ becomes finer:
\[
P(f^t,\varphi) = \lim_{\epsilon \to 0} \limsup_{T \to \infty} \frac{1}{T} \log S(f, \varphi, \epsilon, T).
\]

\nin Note that when $\varphi \equiv 0$, the sum $\sum e^{\varphi_T x}$ simply counts the elements of the $(T,\epsilon)$ spanning set, and we recover $h_{top}(f^t)$. The \textit{measure theoretic pressure}, with respect to an invariant measure $\mu$, is defined to be
\[
P_\mu(f^t,\varphi) = h_\mu(f^t) + \int_X \varphi \, d\mu.
\]
For a H\"older continuous potential $\varphi$, the variational principle states that
\[
P(f^t,\varphi) = \sup_{\mu \in \mathcal{M}(f^t)} P_\mu(f^t, \varphi).
\]
A measure $\mu \in \mathcal{M}(f^t)$ for which $P_\mu(f^t,\varphi) = P(f^t,\varphi)$ is called an \textit{equilibrium measure for $(f^t,\varphi)$}.

\subsection{Conditional measures}
Given a probability space $(Z,\mu)$, a measurable partition determines a way to disintegrate the measure $\mu$. Let $\mathcal{P}$ be a partition of $Z$ into measurable sets. Let $\pi: Z \rightarrow \mathcal{P}$ be the map sending $z\in Z$ to the element $Q \in \mathcal{P}$ that contains it, and set $\hat{\mu} := \pi_*\mu$ (note that this is a measure on the partition $\mathcal{P}$). Then a {\em system of conditional measures} (relative to $\mathcal{P}$) is a family $\{\mu_Q\}_{Q \in \mathcal{P}}$ such that
\begin{enumerate}
\item for each $Q \in \mathcal{P}$, the measure $\mu_Q$ is a probability measure on $Q$, and
\item for each $\mu$-measurable set $B \subset Z$, the map $Q \mapsto \mu_Q(B\cap Q)$ is $\hat{\mu}$-measurable and
\[
\mu(B) = \int_\mathcal{P} \mu_Q(B\cap Q) d\hat{\mu}(Q).
\]
\end{enumerate}
We will often abbreviate this second statement by writing $\mu = \int_\mathcal{P} \mu_Q d\hat{\mu}$. By a theorem of Rokhlin, whenever $\mathcal{P}$ is a measurable partition, there exists a system of conditional measures relative to $\mathcal{P}$. It is a straight-forward consequence of item (2) that any two such systems must agree on a set of full $\hat{\mu}$-measure.

\section{Equilibrium measures for fiber bundles}
Let $\pi: Y \rightarrow X$ be a fiber bundle with $Y$ a measurable metric space and fibers a compact Lie group $V$. Let $F^t: Y \rightarrow Y$ be a smooth flow, and let $\mathcal{M}(F^t)$ denote the set of $F^t$-invariant probability measures on $Y$. If $F^t$ takes fibers to fibers and commutes with the action of the structure group (i.e., $F^t$ is a bundle automorphism for all $t \in \mathbb{R}$), then there is a map $f^t: X \rightarrow X$ such that $\pi \circ F^t = f^t \circ \pi$. In this case, an $F^t$-invariant probability measure $\mu \in \mathcal{M}(F^t)$ can be pushed forward to get an $f^t$-invariant probability measure $\hat{\mu} = \pi_*\mu \in \mathcal{M}(f^t)$ on $X$. Note that as long as the fibers are measurable sets, the partition $\{\pi^{-1}(x)\}_{x \in X}$ of $Y$ is measurable.

Let $\varphi: Y \rightarrow \mathbb{R}$ be a H\"older continuous function. Then, given a measure $\mu \in \mathcal{M}(F^t)$ and a disintegration of that measure $\mu = \int_X \mu_x d\hat{\mu}$, we can define a function $\hat{\varphi}_\mu: X \rightarrow \mathbb{R}$ by taking the average of $\varphi$ on the fibers $\pi^{-1}(x)$:
\[
\hat{\varphi}_\mu(x) = \int_{\pi^{-1}x} \varphi \, d\mu_x.
\]
Since any two such disintegrations of $\mu$ agree on a full $\hat{\mu}$ measure set, any two functions defined by different disintegrations of $\mu$ agree on a set of full $\hat{\mu}$ measure. In general, the function $\hat{\varphi}_\mu: X \rightarrow \mathbb{R}$ is only measurable, since the disintegration $x \mapsto \mu_x$ is only measurable. However, in the case that $\varphi$ is constant on fibers, the H\"older continuity of $\varphi$ implies that $\hat{\varphi}$ is also H\"older continuous.

While the existence of equilibrium measures is in general a non-trivial problem, work of Newhouse and Yomdin shows that such a measure always exists for $C^\infty$ dynamics. Since our Riemannian metric is $C^\infty$, both the geodesic flow and frame flow are also $C^\infty$, and so an equilibrium measure is guaranteed to exist \cite{newhouse89}. Thus, we only need to argue that such an equilibrium measure is unique.

Since $V$ is assumed to be a compact Lie group, it has a bi-invariant metric. We will say that $F: Y \rightarrow Y$ acts isometrically on the fibers if $F$ preserves distances in the fibers with respect to such a metric.

\begin{lemma}\label{embase}
Suppose that $F^t: Y \rightarrow Y$ acts isometrically on the fibers of the bundle $Y \rightarrow X$, and let $\varphi$ be a H\"older continuous function that is constant on fibers. Let $\mu$ be an equilibrium measure for $(F^t,\varphi)$. Then $\hat{\varphi}_\mu$ is a H\"older continuous function and $\hat{\mu}=\pi_*\mu$ is an equilibrium measure for $(f^t, \hat{\varphi}_\mu)$.
\end{lemma}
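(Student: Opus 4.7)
The plan is to reduce the equilibrium property on $Y$ to one on $X$ via the fact that an isometric extension has zero fiber entropy, so that the pressures match up exactly through the projection.

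First I would observe that because $\varphi$ is constant on fibers, the fiber average $\hat\varphi_\mu(x) = \int_{\pi^{-1}(x)} \varphi\, d\mu_x$ is simply the common value of $\varphi$ on $\pi^{-1}(x)$, independent of the disintegration and hence of $\mu$. Denote it $\hat\varphi$. If $d(x,x')$ is small, pick points $y \in \pi^{-1}(x), y' \in \pi^{-1}(x')$ with $d_Y(y,y')$ comparable to $d_X(x,x')$ (using a local trivialization and a smooth section); then $|\hat\varphi(x)-\hat\varphi(x')| = |\varphi(y)-\varphi(y')|$ gives the Hölder estimate for $\hat\varphi$ from that for $\varphi$. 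Also, for any $F^t$-invariant $\nu$ with $\pi_*\nu = \hat\nu$, the disintegration identity gives
\[
\int_Y \varphi \, d\nu = \int_X \hat\varphi \, d\hat\nu.
\]

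Next I would establish that $h_\nu(F^t) = h_{\hat\nu}(f^t)$ for every $\nu \in \mathcal{M}(F^t)$ projecting to $\hat\nu$. The Abramov--Rokhlin formula gives $h_\nu(F^t) = h_{\hat\nu}(f^t) + h_\nu(F^t \mid \pi^{-1}\mathcal{B}_X)$, and the conditional fiber entropy vanishes because the $F^t$-action along fibers is by isometries of a compact group $V$, hence equicontinuous with zero topological (and therefore zero measure-theoretic) entropy. Combining the entropy and integral identities yields
\[
P_\nu(F^t,\varphi) = h_\nu(F^t) + \int_Y \varphi\, d\nu = h_{\hat\nu}(f^t) + \int_X \hat\varphi\, d\hat\nu = P_{\hat\nu}(f^t,\hat\varphi),
\]
for every $\nu \in \mathcal{M}(F^t)$, applied in particular to our equilibrium measure $\mu$.

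Finally I would use surjectivity of $\pi_*: \mathcal{M}(F^t) \to \mathcal{M}(f^t)$: given any $\hat\nu \in \mathcal{M}(f^t)$, the fiber of $\pi_*$ over $\hat\nu$ is a non-empty, weak-$*$ compact, convex, $F^t$-invariant subset of the probability measures on $Y$ (non-emptiness follows by taking any measure projecting to $\hat\nu$, e.g.\ a Haar lift on a local trivialization, and averaging by $F^t$ over $[0,T]$ and passing to a weak-$*$ limit, which remains in the fiber by continuity of $\pi$). Applying Kakutani--Markov, this fiber contains an $F^t$-invariant element. Thus every $\hat\nu$ lifts, so taking suprema in the identity above gives $P(F^t,\varphi) = P(f^t,\hat\varphi)$. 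Since $P_\mu(F^t,\varphi) = P(F^t,\varphi)$ by hypothesis, we conclude $P_{\hat\mu}(f^t,\hat\varphi) = P(f^t,\hat\varphi)$, i.e.\ $\hat\mu$ is an equilibrium measure for $(f^t,\hat\varphi)$.

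The only genuinely delicate step is the entropy equality $h_\nu(F^t) = h_{\hat\nu}(f^t)$; invoking Abramov--Rokhlin in the flow (not just the time-one map) setting and justifying that the fiber contribution vanishes for a $V$-isometric extension is the step that most deserves a careful citation, though it is classical. Everything else reduces to direct disintegration and a standard lifting argument.
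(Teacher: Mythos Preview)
Your proof is correct and follows essentially the same route as the paper's. Both arguments rest on the same two ingredients: the entropy equality $h_\nu(F^t)=h_{\hat\nu}(f^t)$ coming from the isometric fiber action (you cite Abramov--Rokhlin, the paper cites Ledrappier--Walters, which amount to the same thing here), and the matching of the integral terms via disintegration. The one place where you differ slightly is in showing $P(F^t,\varphi)=P(f^t,\hat\varphi)$: the paper simply lifts an equilibrium measure $\nu$ on $X$ by putting Haar measure $\lambda_x$ on each fiber, observing directly that $\tilde\nu=\int_X \lambda_x\,d\nu$ is $F^t$-invariant because $F^t$ is a bundle automorphism acting by isometries on the compact group fibers; your Krylov--Bogolyubov averaging argument is more general than needed but certainly valid. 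Your explicit remark that $\hat\varphi_\mu$ is independent of $\mu$ when $\varphi$ is constant on fibers is a nice clarification the paper leaves implicit.
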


\begin{proof}
Since $F^t$ acts isometrically on the fibers, $F^t|_{\pi^{-1}(x)}$ does not generate any entropy. Then the Ledrappier-Walters formula~\cite{LW77} implies that $h_\mu(F^t) = h_{\hat{\mu}}(f^t)$, and
\begin{align*}
P(F^t,\varphi) &= h_\mu(F^t) + \int_Y \varphi \, d\mu \\
            &= h_{\hat{\mu}}(f^t) + \int_X \int_{\pi^{-1}x} \varphi \, d\mu_x d\hat{\mu}(x) \\
            &\leq P(f^t, \hat{\varphi}_\mu). 
\end{align*}
Now suppose that $\nu$ is an equilibrium measure on $X$ for $(f^t,\hat{\varphi}_\mu)$, and let $\lambda_x$ be (normalized) Haar measure on the fiber $\pi^{-1}(x)$. Then $x \mapsto \lambda_x$ is $\nu$-measurable, and $\tilde{\nu} = \int_X \lambda_x d\nu(x) \in \mathcal{M}(F^t)$. Moreover, $\hat{\varphi}_\mu(x) = \int \varphi \, d\mu_x = \int \varphi \, d\lambda_x$ because $\varphi$ is constant on fibers. Thus,
\begin{align*}
P(f^t, \hat{\varphi}_\mu) &= h_\nu(f^t) + \int_X (\int_{\pi^{-1}x} \varphi \, d\lambda_x) d\nu(x) \\
                        &= h_{\tilde{\nu}}(F^t) + \int_Y \varphi \, d\tilde{\nu} \\
                        &\leq P(F^t,\varphi),
\end{align*}
so $P(f^t,\hat{\varphi}_\mu) = P(F^t,\varphi)$. Hence,
\begin{align*}
P_{\hat{\mu}}(f^t,\hat{\varphi}_\mu) 
         &= h_{\hat{\mu}}(f^t) + \int_X \hat{\varphi}_\mu d\hat{\mu} \\
         &= h_\mu(F^t) + \int_Y \varphi \, d\mu \\
         &= P(F^t,\varphi) = P(f^t,\hat{\varphi}_\mu),
\end{align*}
so $\hat{\mu}$ is an equilibrium measure for $(f^t,\hat{\varphi}_\mu)$.
\end{proof}

In order to study the conditional measures $\mu_x$, we define subgroups of $\Isom V_x$ by their interaction with the measure $\mu_x$. Let $\mu$ be a measure on $Y$ with decomposition $\mu = \int \mu_x d\hat{\mu}$ along fibers, and let
\[
G_x^\mu = \{ \phi \in \Isom V_x ~|~ \phi_*\mu_x = \mu_x ~\text{and}~ \phi(\xi g) = \phi(\xi)g ~\text{for all}~ g \in V, \mu_x\text{-a.e.}~ \xi \in V_x \}.
\]
This is clearly a subgroup of $\Isom V_x$. Moreover, since any two decompositions of $\mu$ agree on a set of full $\hat{\mu}$-measure, any two sets $\{ G_x^\mu ~|~ x \in X\}$ defined using different decompositions of $\mu$ agree $\hat{\mu}$-almost everywhere.

The next lemma characterizes the support of conditional measures. It is an adaptation of Lemma 5.4 from~\cite{KS96}.

\begin{lemma}\label{clsubgrpsupp}
Let $F: Y \rightarrow Y$ be a fiber bundle automorphism that acts by isometries on the fibers and suppose $\mu$ is an ergodic measure. Then for $\hat{\mu}$-almost every $x \in X$ and $\mu_x$-almost every $\xi \in V_x$, 
\[
G_x^\mu \xi = \supp \mu_x.
\]
\end{lemma}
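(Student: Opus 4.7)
The plan is to prove the two inclusions separately. The inclusion $G_x^\mu \xi \subseteq \supp \mu_x$ is immediate: any $\phi \in G_x^\mu$ preserves $\mu_x$ and hence its support, so $\phi(\xi) \in \supp \mu_x$ whenever $\xi \in \supp \mu_x$. The substantive task is the reverse inclusion, i.e., that $G_x^\mu$ acts transitively on $\supp \mu_x$.

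To produce enough elements of $G_x^\mu$, I would follow the familiar Katok--Spatzier scheme combining Lusin's theorem with Poincar\'e recurrence. Apply Lusin's theorem to the $\hat\mu$-measurable map $x \mapsto \mu_x$ (valued in probability measures on fibers, interpreted through a measurable local trivialization of $\pi: Y \to X$) to obtain, for any $\epsilon > 0$, a compact set $K_\epsilon \subset X$ with $\hat\mu(K_\epsilon) > 1 - \epsilon$ on which this map is continuous. By Poincar\'e recurrence for $f^t$ on $K_\epsilon$, for $\hat\mu$-almost every $x \in K_\epsilon$ there is a sequence $t_n \to \infty$ with $f^{t_n}(x) \in K_\epsilon$ and $f^{t_n}(x) \to x$. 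The fiber map $F^{t_n}: V_x \to V_{f^{t_n}(x)}$ is an isometry; composing with the trivialization identification $V_{f^{t_n}(x)} \cong V_x$ yields near-isometries $\phi_n: V_x \to V_x$ which, by compactness of $\Isom V_x$, admit a subsequential limit $\phi$. Continuity of $x \mapsto \mu_x$ on $K_\epsilon$ together with $(F^{t_n})_* \mu_x = \mu_{f^{t_n}(x)}$ gives $\phi_* \mu_x = \mu_x$; and since each $F^{t_n}$ is a bundle automorphism, hence commutes with the right $V$-action, so does $\phi$. Thus $\phi \in G_x^\mu$.

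For transitivity, I would trivialize $V_x \cong V$ so that $G_x^\mu$ is identified with a closed subgroup $H_x \leq V$ acting by left translation, and $\supp \mu_x$ becomes a left-$H_x$-invariant subset of $V$, i.e., a union of right cosets $H_x \xi$. The task is to show this union consists of a single coset. I would run the Lusin--recurrence argument above in $Y$ (along $F^t$-orbits rather than $f^t$-orbits), using density of $\mu$-typical orbits in $\supp \mu$ from ergodicity, to produce elements of $G_x^\mu$ carrying any chosen $\xi \in \supp \mu_x$ arbitrarily close to any other chosen $\eta \in \supp \mu_x$. Equivalently, if $\supp \mu_x$ were a union of more than one $H_x$-coset on a positive-measure set of base points, the ``coset label'' of a point would define a non-trivial measurable $F^t$-invariant function on $(\supp \mu, \mu)$, contradicting ergodicity.

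The main obstacle will be making the coset labeling simultaneously measurable and $F^t$-equivariant. In practical terms, one must verify that the subgroups $H_x$ transform correctly under the dynamics, namely that $H_{f^t x}$ is conjugate to $H_x$ via the bundle isomorphism induced by $F^t$, and one must choose trivializations consistently along orbits so that the approximate limits from the previous paragraph translate into an honest coset-preservation statement. Once this bookkeeping is in place the desired transitivity -- and hence the lemma -- follows.
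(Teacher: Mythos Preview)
Your proposal is correct and follows essentially the same route as the paper: Lusin on $x \mapsto \mu_x$, ergodicity to get orbit density in $\supp\mu$, and limits of fiber isometries to produce the needed elements of $G_x^\mu$. The paper is more direct than your outline, though: it dispenses with both your Poincar\'e-recurrence-on-the-base warmup and your coset-labeling alternative, and for any target $\eta \in \supp\mu_x$ simply chooses times $t_i$ with $F^{t_i}\xi \to \eta$ (by orbit density of $\mu_x$-typical $\xi$), so that the limiting isometry $\phi = \lim F^{t_i}|_{V_x}$ satisfies $\phi(\xi) = \eta$ exactly and lies in $G_x^\mu$ by the Lusin continuity---no approximation step and none of the bookkeeping you flag is required.
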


\begin{proof}
First, we show that $G_x^\mu \xi \subseteq \supp \mu_x$. Fix an $x \in X$, let $\phi \in G_x^\mu$ and let $\xi \in \supp \mu_x$. Let $B_\epsilon \subset V_x$ be the ball of radius $\epsilon$ around $\phi \, \xi$. Then, by the definition of $G_x^\mu$, we have $\phi_*\mu_x = \mu_x$ and
\[
\mu_x(B_\epsilon) = \phi_* \mu_x (B_\epsilon) = \mu_x (\phi^{-1}B_\epsilon) > 0
\]
because $\xi \in \phi^{-1}B_\epsilon$ and $\xi \in \supp \mu_x$. Hence, $\phi \, \xi \in \supp \mu_x$.

Next, we show that there is a set of full $\hat{\mu}$ measure in $X$ such that for $\mu_x$-almost every $\xi$, we have $G_x^\mu \xi \supseteq \supp \mu_x$. Let $\eta \in \supp \mu_x$; we will show that for $\mu_x$-almost every $\xi$ there is a $\phi \in G_x^\mu$ such that $\phi (\xi) = \eta$. Recall that the $\mu$-disintegration $x \mapsto \mu_x$ is only a measurable map. By Lusin's Theorem, however, for any $\epsilon>0$ there exists a closed set $K_\epsilon \subset X$ such that
\begin{enumerate}
\item the map $K_\epsilon \rightarrow \mathcal{M}(E)$ taking $x$ to $\mu_x$ is continuous, and
\item $\hat{\mu}(K_\epsilon) > 1-\epsilon$.
\end{enumerate}
Since $\mu$ is an ergodic measure, the Birkhoff Ergodic Theorem implies that $\mu$-almost every point in $Y$ has dense orbit in $\supp \mu$. Thus, the set
\[
K'_\epsilon := \{ \xi \in K_\epsilon \, | \, \mu_x\text{-a.e.} \, \xi \in \pi^{-1}(x) \text{ has dense orbit in } \supp \mu \}
\]
has measure $\hat{\mu}(K'_\epsilon) = \hat{\mu}(K_\epsilon) > 1-\epsilon$. Moreover, for any $x \in K'_\epsilon$ and $\mu_x$-almost any $\xi \in \pi^{-1}(x)$, there exists a sequence of times $t_i$ such that $F^{t_i} \xi \rightarrow \eta$. Then:
\begin{enumerate}
\item Since $F^{t_i}|_{\pi^{-1}(x)}$ are all isometries, $F^{t_i}|_{\pi^{-1}(x)} \rightarrow \phi \in \Isom V_x$.
\item $\phi(\xi) = \eta$.
\item By the $F^t$-invariance of $\mu$, we have that $F^t_* \mu_x = \mu_{f^t x}$ for $\hat{\mu}$-almost every $x$. This, together with the continuity of the map $x \mapsto \mu_x$ on $K'_\epsilon$, implies that
\[
\phi_* \mu_x = \lim_{i \to \infty} F^{t_i}_* \mu_x = \lim_{i \to \infty} \mu_{f^{t_i} x} = \mu_x.
\]
Further, since $F^t$ commutes with the action of the isometry group on the fibers and $F^{t_i}|_{\pi^{-1}(x)} \rightarrow \phi$, we get that $\phi(\xi g) = \phi(\xi)g$. Hence, $\phi \in G_x^\mu$.
\end{enumerate}
Letting $\epsilon \to 0$ gives a set $K$ of full $\hat{\mu}$-measure such that for any $x \in K$, $\mu_x$-almost every $\xi$ satisfies $G_x^\mu \xi \supseteq \supp \mu_x$.
\end{proof}

\subsection{Regularity of sections}
Let $M$ be a closed manifold, and consider an Anosov flow $f^t: M \rightarrow M$. Then $M$ has the following local product structure with respect to the strong stable foliation $W^{ss}$ and weak unstable foliation $W^{u}$: for any $x \in M$, there exists a neighborhood $V_x$ of $x$ such that every point in $V_x$ can be written as $[y,z] := W_{loc}^u(y) \cap W_{loc}^{ss}(z)$ for some $y \in W_{loc}^{ss}(x)$ and $z \in W_{loc}^u(x)$. 

Let $\pi_x^\sigma: V_x \rightarrow W_{loc}^\sigma(x)$ (for $\sigma=ss,u$) be the projection map onto the appropriate local manifold. Given a measure $\mu$ on $M$, let $\mu_x^\sigma=(\pi_x^\sigma)_* \mu$ (for $\sigma=ss,u$). We say that \emph{$\mu$ has local product structure} if 
\[ 
d \mu ([y,z]) =  \phi_x(y,z) \: d \mu_x^{ss} (y)  \times d \mu_x^u (z) 
\]
for any $y \in W_{loc}^{ss}(x)$ and any $z \in W_{loc}^u(x)$, where $\phi_x$ is a non-negative Borel function.

\begin{remark}
The definition above corresponds to the one used in~\cite{Leplaideur}. An alternate definition states that $\mu$ has local product structure if $\mu$ is \textit{locally equivalent} to $\mu_x^{ss} \times \mu_x^u$~\cite{Bonatti-Diaz-Viana}. This alternate definition is a strictly stronger property, since the function $\phi_x$ above is only a \textit{non-negative} Borel function.
\end{remark}

Methods of \cite{GS97} extend to prove the following Liv\v{s}ic regularity theorem. 

\begin{theorem}\label{sectionregularity}
Let $P \rightarrow M$ be a principal $H$-bundle over a compact, connected manifold $M$ with $H$ a compact group. Suppose $G=\mathbb{R}$ or $\mathbb{Z}$ acts H\"older continuously by bundle automorphisms such that the induced action on $M$ is Anosov. Let $m$ be a $G$-invariant measure on $M$ with local product structure, and let $L \subseteq M$. Let $V$ be a transitive left $H$-space that admits an $H$-invariant metric, and consider the associated bundle $E_V \rightarrow M$. Then any $G$-invariant measurable (w.r.t. $m$) section $L \rightarrow E_V$ is H\"older continuous on a subset $L' \subseteq L$ with $m(L') = m(L)$.
\end{theorem}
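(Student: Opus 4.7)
The plan is to adapt the classical Livšic regularity argument: reduce the statement to a H\"older cocycle equation in local trivializations, use Lusin regularization together with recurrence of the Anosov base to obtain H\"older continuity of $s$ along strong stable and strong unstable leaves, and then synthesize these estimates using the local product structure of $m$.

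First I would work in local trivializations $P|_U \cong U \times H$ and the induced $E_V|_U \cong U \times V$, in which the $G$-action takes the form $(x,v) \mapsto (f^t x, A(x,t)\cdot v)$ for a H\"older cocycle $A: M \times G \to H$. A section $s$ is represented by a map $\bar s: U \to V$, and $G$-invariance becomes the functional equation
\[
A(x,t)\cdot \bar s(x) = \bar s(f^t x).
\]
By Lusin's theorem, for each $\epsilon > 0$ there is a compact set $K_\epsilon \subseteq L$ with $m(K_\epsilon) > m(L) - \epsilon$ on which $\bar s$ is uniformly continuous, and by the Birkhoff ergodic theorem (applied to ergodic components of $m$) almost every point has a positive density of return times to $K_\epsilon$ under both $f^t$ and $f^{-t}$.

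Second, I would establish H\"older continuity of $\bar s$ along strong stable leaves. For a generic $x \in K_\epsilon$ and $y \in W^{ss}_{loc}(x)$ nearby, choose a return time $t \asymp \log(1/d(x,y))$ with $f^t x, f^t y \in K_\epsilon$; Anosov contraction gives $d(f^t x, f^t y) \leq Ce^{-\lambda t} d(x,y)$, so uniform continuity of $\bar s|_{K_\epsilon}$ controls $d_V(\bar s(f^t x), \bar s(f^t y))$, and the cocycle relation $\bar s(x) = A(x,t)^{-1}\cdot \bar s(f^t x)$ combined with H\"older continuity of $A(\cdot,t)$ pulls the estimate back to give
\[
d_V(\bar s(x), \bar s(y)) \leq C' d(x,y)^\alpha
\]
for some $\alpha > 0$. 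A symmetric argument using $f^{-t}$ yields the analogous estimate along strong unstable leaves, and when $G = \mathbb{R}$ continuity along the flow direction is immediate from $G$-invariance.

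Finally, I would glue these one-dimensional estimates using the local product structure of $m$. For $m$-a.e. $x$, the decomposition $dm([y,z]) = \phi_x(y,z)\, dm^{ss}_x(y) \times dm^u_x(z)$ together with a Fubini argument selects a full-measure subset $L' \subseteq L$ such that for $m$-a.e. $y$ near each $x \in L'$, the product-structure path from $x$ through $[y_1,y_2]$ to $y$ traverses only points at which the stable and unstable H\"older estimates apply, and the triangle inequality delivers the global H\"older estimate on $L'$. The main obstacle is the usual Livšic balance of rates: the Anosov contraction rate $\lambda$ must strictly dominate the exponential growth of the H\"older constant of $A(\cdot,t)$ so that $\alpha > 0$. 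Once this is secured, the machinery of \cite{GS97} transfers, with the only structural adaptation being the use of local product structure --- which, because $\phi_x$ need not be positive everywhere, requires the Fubini step to be phrased in terms of full-measure subsets rather than equivalence of measures.
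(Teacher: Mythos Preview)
Your proposal is correct and follows essentially the same route as the paper: both reduce to local trivializations and a cocycle equation, apply Lusin to obtain a large set of uniform continuity, use recurrence to this set together with Anosov contraction to get H\"older estimates along strong stable and unstable leaves, and then combine these via the local product structure of $m$ and the triangle inequality. One small remark: the ``balance of rates'' you flag as the main obstacle is in fact benign here, since $H$ is compact and acts isometrically on $V$, so the action of $A(x,t)$ introduces no distortion and the telescoping estimate for $d_H(A(x,t),A(y,t))$ along a stable leaf converges uniformly in $t$; the genuine delicacy, as you correctly note at the end, lies in handling the merely non-negative density $\phi_x$ when passing through the intermediate point $[y,z]$.
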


\begin{remark}
Goetze and Spatzier in \cite{GS97} prove this result for Anosov actions for an \textit{invariant smooth measure} for Lie groups with a bi-invariant metric; the theorem above extends this to invariant measures with local product structure.
\end{remark}

\begin{remark}\label{origbdle}
In particular, $H$ is a transitive left $H$-space with $H$-invariant metric. In this case, the conclusion of the theorem states that any $G$-invariant measurable section $L \rightarrow P$ is H\"older continuous on a full measure subset of $L$.
\end{remark}

The remainder of this section provides an outline of the ideas used to prove this theorem in the case $V=H$; the interested reader can find details in \cite{GS97}. First, we discuss the relationship between sections of a bundle and cocycles. A \textit{cocycle} is a map $\alpha: G\times M\rightarrow H$ such that 
\[
\alpha(t_2+t_1, x) = \alpha(t_2, g_{t_1} x) \alpha(t_1,x).
\]
Two such cocycles $\alpha$ and $\beta$ are \textit{cohomologous} if there is a function $\psi: M \rightarrow H$ such that
\[
\alpha(t,x) = \psi(g_t x) \beta(t,x) \psi(x)^{-1}.
\]
We say that $\alpha$ and $\beta$ are \textit{measurably} (resp. \textit{H\"older}) \textit{cohomologous} if $\psi$ can be chosen to be a measurable (resp. H\"older continuous) function. If the underlying action is ergodic, then it follows that $\psi$ is unique up to a constant. In this case, the regularity of the cohomology doesn't depend on the $\psi$ chosen.

Measurable sections correspond under the $G$-action to measurable cocycles as follows. Let $\sigma: M \rightarrow P$ be a measurable section. This determines a measurable cocycle $\alpha: G \times M \rightarrow H$ by the relationship 
\[
g_t \sigma(x) = \sigma(g_t x) \alpha(t,x),
\]
($\alpha$ is uniquely determined since $P \rightarrow M$ is a principle $H$-bundle). Given a function $b: M \rightarrow H$, the section $\sigma_b(x) := \sigma(x) b(x)$ yields a cocycle $\beta(t,x)$ that is cohomologous to $\alpha$. As $\sigma$ is a measurable $G$-invariant section, $\alpha$ is in fact a measurable coboundary (i.e., measurably cohomologous to the trivial cocycle).

Since $P \rightarrow M$ may be a non-trivial bundle, there may be topological obstructions to a continuous section $M \rightarrow P$. Thus, in order to consider increasing the regularity of the section $\sigma$, we must break up the correspondence with cocycles into local trivializations on open sets $U_i$ that cover $M$. On an open set $U_i$, there is a smooth section $s_i: U_i \rightarrow P$. Then, for any $x \in U_i$, there is a map $h_i: P \rightarrow H$ such that for any $x^*\in P$ in the fiber over $x \in U_i$,
\[
x^* = s_i(x) h_i(x^*).
\]
In particular, $\sigma(x) = s_i(x) h_i(\sigma(x))$. Note that the maps $h_i$ are uniformly Lipschitz, but $h_i \circ \sigma$ is \textit{a priori} only measurable since $\sigma$ is only measurable.

The proof of the theorem thus reduces to showing that $h_{i(x)} \circ \sigma: L \rightarrow H$ is H\"older continuous. (This then implies that the section $\sigma$ must also be H\"older continuous, since $h_{i(x)}$ is uniformly Lipschitz and its image is transverse to the fibers.) Let $L' \subseteq L$ be the set of points that are also in the support of $m$ and for which the Birkhoff Ergodic Theorem holds. We have $m(L') = m(L)$. Consider two points $x, w \in L'$ that are close enough to be in a neighborhood of local product structure. Then, there exist $y \in W_{loc}^{ss}(x)$ and $z \in W_{loc}^u (x)$ such that $w=[y,z]_x$. Observe that also $x=[z,y]_w$.

By local product structure of $m$, we have 
\[ 
d m ([y,z]) =  \phi_x(y,z) \: d m_x^{ss} (y)  \times d m_x^u (z) 
\]
for a non-negative Borel function $\phi_x$. Since $w=[y,z]$ is in the support of $m$, $\phi$ must be positive at $(y,z)$, $y$ must be in the support of $m_x^{ss}$, and $z$ must be in the support of $m_x^u$. Likewise, $z$ must be in the support of $m_w^{ss}$ and $y$ must be in the support of $m_w^{u}$. Then, by the triangle inequality,
\[
d(h_i(\sigma(x)), h_i(\sigma(w)) \leq d(h_i(\sigma(x)),h_i(\sigma(y))) + d(h_i(\sigma(y)),h_i(\sigma(w)))
\]
(where $d$ denotes distance in $H$). Thus, the problem is further reduced to showing that $h_{i(x)} \circ \sigma$ is H\"older continuous along stable and unstable manifolds.

Toward this end, consider two points $x,y \in L'$ on the same stable manifold. We want to measure the distance between $h_{i(x)}(\sigma(x))$ and $h_{i(y)}(\sigma(y))$. Recall that $\sigma$ is a $G$-invariant section, and note that $x$ and $y$ are on the same stable manifold and so eventually are in the same open set $U_j$. Then the H\"older continuity of the $G$-action, along with the exponential contraction along the stable leaf, allow us to reduce our consideration to the distance between $h_j(g_t \sigma(x))$ and $h_j(g_t \sigma(y))$. 

Although $\sigma$ is a measurable section, Lusin's Theorem guarantees a compact set $K \subset L$ with $m(K)>1/2$ on which $\sigma$ is uniformly continuous. This implies that, for $x$ and $y$ in a set of full measure, there is an unbounded set of $t$ such that $g_t \sigma(x)$ and $g_t \sigma(y)$ are in $K$. For such $x$ and $y$, combining this with the previous paragraph gives H\"older continuity of $h_{i(x)} \circ \sigma$ along the local stable manifold. A similar argument shows that $h_{i(x)} \circ \sigma$ is H\"older continuous along the unstable manifold.

\section{Equilibrium measures for frame flows}
Consider the frame flow $F^t: FM \rightarrow FM$ of a closed, oriented, negatively curved manifold $M$, and a H\"older continuous potential $\varphi: FM \rightarrow \mathbb{R}$ that is constant on the fibers of the bundle $FM \rightarrow SM$. By smoothness assumptions, there exists an equilibrium measure $\mu$ for $(F^t,\varphi)$. Then, by Lemma~\ref{embase}, $\hat{\mu}$ is an equilibrium measure for $(f^t, \hat{\varphi}_\mu)$, and $\hat{\varphi}_\mu$ is H\"older continuous. Next, we recall some results about equilibrium measures for hyperbolic flows:

\begin{theorem}[Bowen-Ruelle, Leplaideur, \cite{Bowen-Ruelle}, \cite{Leplaideur}]
Let $f^t: M \rightarrow M$ be an Anosov flow and $\varphi: M \rightarrow \mathbb{R}$ a H\"older continuous potential. Then there exists a unique equilibrium measure for $(f^t,\varphi)$. It is ergodic and has local product structure and full support.
\end{theorem}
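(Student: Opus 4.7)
The plan is to prove this classical result by passing to symbolic dynamics via a Markov partition, where the Ruelle--Perron--Frobenius machinery for Hölder potentials on shifts of finite type gives a unique Gibbs state with all the required properties.

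The first step is to invoke Bowen's construction of Markov partitions for Anosov flows, which represents $(M,f^t)$ (up to a set of measure zero and a finite-to-one coding) as a suspension flow over a topologically mixing shift of finite type $(\Sigma_A,\sigma)$ with a Hölder roof function $r\colon \Sigma_A \to \mathbb{R}_{>0}$. Under this coding, a Hölder continuous potential $\varphi$ on $M$ pulls back and integrates along the roof to a Hölder continuous potential $\Phi\colon \Sigma_A \to \mathbb{R}$ given by $\Phi(x) = \int_0^{r(x)} \varphi(f^t\bar x)\,dt$, and there is a standard correspondence between $f^t$-invariant probability measures on $M$ and $\sigma$-invariant probability measures on $\Sigma_A$ normalized by $\int r\,dm$. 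Under this correspondence, equilibrium measures for $(f^t,\varphi)$ correspond to equilibrium measures for $(\sigma, \Phi - P(f^t,\varphi)r)$.

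Next I would apply the Ruelle--Perron--Frobenius theorem on $\Sigma_A$: for any Hölder potential $\Psi$ there exists a unique equilibrium state $m_\Psi$, obtained as the eigenmeasure of the dual transfer operator paired with its eigenfunction. This measure is a Gibbs state, meaning there exist constants $C>1$ and $P$ such that for every cylinder $[x_0,\dots,x_{n-1}]$ containing $x$,
\[
C^{-1} \leq \frac{m_\Psi([x_0,\dots,x_{n-1}])}{\exp\bigl(-nP + S_n\Psi(x)\bigr)} \leq C.
\]
The Gibbs property, together with the product structure of cylinder sets in $\Sigma_A$ along stable/unstable directions, yields local product structure of $m_\Psi$. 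Uniqueness follows from the spectral gap of the transfer operator acting on Hölder functions (equivalently, from strict convexity / differentiability of the pressure functional at $\Psi$). Ergodicity is immediate from the simplicity of the maximal eigenvalue, and positivity on cylinders gives full support.

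Finally I would transfer these properties back to the flow: the suspension construction turns $m_\Psi$ into the unique equilibrium state $\mu = (m_\Psi \times \text{Leb})/\int r\,dm_\Psi$ on $M$, and ergodicity, full support, and local product structure (now with respect to $W^{ss}$ and $W^u$, using that the weak unstable foliation combines the strong unstable with the flow direction) are preserved under suspension. The main obstacle is the technical passage through the symbolic coding: Markov partitions for flows are only piecewise smooth and the coding map is only finite-to-one off a null set, so one must verify carefully that equilibrium states, and in particular their local product structure, correspond correctly across the coding and that the roof function's Hölder regularity is sufficient for the Ruelle operator theory to apply. Everything else is then essentially bookkeeping on top of the symbolic statement.
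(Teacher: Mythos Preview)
The paper does not prove this theorem at all: it is stated as a classical result and attributed to Bowen--Ruelle and Leplaideur via citations, then used as a black box in the proof of Theorem~\ref{main}. So there is no ``paper's own proof'' to compare your proposal against.

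That said, your sketch is a faithful outline of the classical argument as it appears in the cited sources: Bowen's Markov partitions reduce the Anosov flow to a suspension over a mixing subshift of finite type, the Ruelle--Perron--Frobenius theorem produces a unique Gibbs/equilibrium state on the shift with full support and the Gibbs property, and one then transports everything back through the suspension. The one point worth flagging is that the local product structure in the sense used by this paper (and by Leplaideur) is phrased intrinsically on $M$ in terms of the foliations $W^{ss}$ and $W^u$ and a non-negative Borel density $\phi_x$, not symbolically; deducing this from the Gibbs property on $\Sigma_A$ requires an extra argument (this is precisely the content of \cite{Leplaideur}, which is why it is cited separately alongside Bowen--Ruelle). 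Your last paragraph acknowledges this passage as the main technical obstacle, which is accurate.
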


\subsection{Proof of Theorem~\ref{main} in dimension 3}
We first prove Theorem~\ref{main} in the case $n=3$, where the logic of the proof is the same but the groups are simpler. The reason for this is that in the $3$-dimensional case, the fibers of the bundle $FM \rightarrow SM$ are $\mathbb{S}^1$, which is an abelian group.

Consider the conditional measures $\{ \mu_x \}$ given by Rokhlin decomposition $\mu = \int \mu_x d\hat{\mu}$. By Theorem~\ref{clsubgrpsupp}, the support of a conditional measure $\mu_x$ for a typical point $x$ is the orbit of a closed subgroup of isometries of the fiber. Since the fibers of $FM \rightarrow SM$ are $SO(2) \approx \mathbb{S}^1$, we get two possibilities: either
\begin{enumerate}
\item $\supp \mu_x = \mathbb{S}^1$ for $\hat{\mu}$-almost every $x$, or
\item $\mu_x$ is atomic and supported on $m$ points for $\hat{\mu}$-almost every $x$.
\end{enumerate}
In the first case, we are done, since $\hat{\mu}$ is unique and thus $\mu = \int \mu_x \, d\hat{\mu}$ is uniquely determined. We will show that topological considerations prevent the second case from occurring.

The easiest case is if $m=1$. Let $L$ be the full $\hat{\mu}$-measure set on which $\supp \mu_x$ is one point. This gives a measurable section $\sigma: L \longrightarrow FM$ sending $x$ to the point $\supp \mu_x$. By Theorem~\ref{sectionregularity}, $\sigma$ is actually H\"older continuous. Because $\hat{\mu}(L)=1$ and $\hat{\mu}$ has full support on $SM$, $\sigma$ can then be extended to a H\"older continuous section $SM \rightarrow FM$. Restricting this section to $S_p M$ for some $p \in M$ gives a continuous map on $S_p M \approx \mathbb{S}^2$ that sends each point to an element of $SO(2) \approx \mathbb{S}^1$. This can be seen as a non-vanishing, continuous vector field on $\mathbb{S}^2$, which is a contradiction.

Now suppose that $m>1$. Let $F$ be the fiber of the bundle $FM \to SM$. Construct a new bundle $FM^m \to SM$ with fibers the Cartesian product $F^m$. Then the discussion above produces a measurable map $\sigma: L \rightarrow FM^m/\Sigma_m$, where $\Sigma_m$ acts on $FM^m$ by permutations, sending $x \in L$ to the $m$ points in the support of $\mu_x$. Now we apply Theorem~\ref{sectionregularity} to the associated bundle $E_V = FM^m/\Sigma_m$ with $H = SO(2)^m$ and $V = SO(2)^m/\Sigma_m$. This implies that $\sigma$ is a H\"older continuous map on $L$, which then can be extended to a H\"older continuous map $M \rightarrow FM^m/\Sigma_m$. Then the projection map $FM^m/\Sigma_m \rightarrow SM$, restricted to the preimage of a set $S_pM$ is an $m$-fold cover of $S_pM \approx \mathbb{S}^2$. Since $\mathbb{S}^2$ is simply connected, the preimage must be a disjoint union of $m$ copies of $\mathbb{S}^2$. Restricting the cover to one of these copies gives a non-vanishing, continuous vector field on $\mathbb{S}^2$ as in the case $m=1$, which is a contradiction.

\subsection{Proof of Theorem~\ref{main} in higher dimensions} \label{sec:ProofThm1}
More generally, the structure group of $FM \to SM$ is $SO(n-1)$. Recall that the measure $\mu_x$ is invariant under the group $G_x^\mu$ by construction. Then either $G_x^\mu = SO(n-1)$ for $\hat{\mu}$-almost every $x$ and $\mu$ is determined by $\hat{\mu}$, or else $G_x^\mu$ is a strict subgroup of $SO(n-1)$ $\hat{\mu}$-almost everywhere. Moreover, by ergodicity of $\hat{\mu}$, $G_x^\mu$ must be the same subgroup $H < SO(n-1)$ $\hat{\mu}$-almost everywhere.

Thus, we get a measurable section $\sigma: SM \rightarrow FM/H$ that takes $x$ to the support of $\mu_x$ on a set of full $\hat{\mu}$ measure. By Theorem~\ref{sectionregularity}, we can extend this to a continuous, global section $SM \rightarrow FM/H$. Such a section gives a reduction of the structure group of $FM \rightarrow SM$, as follows. The section $\sigma$ provides a trivialization of the bundle $FM/H \rightarrow SM$. Since $FM \rightarrow FM/H$ is a principle $H$-bundle, the the pullback $\sigma^*(FM) = FM_H$ is a reduction of $FM$ with structure group $H$.
\[
\begin{tikzcd}
FM_H \arrow[dotted]{r}{} \arrow[dotted]{d}{} & FM \arrow{d}{\pi} \\
SM \arrow{r}{\sigma} & FM/H
\end{tikzcd}
\]

A non-trivial reduction of the structure group of $FM \rightarrow SM$ also gives a non-trivial reduction of the structure group of the restricted bundle $FM \rightarrow S_pM \approx \mathbb{S}^{n-1}$. Then, for $n$ odd and not equal to $7$, Proposition 5.1 of~\cite{Brin-Gromov} implies that $H$ cannot act transitively on $\mathbb{S}^{n-2} = \pi_2^{-1}(p)$. However, for $n$ odd, any structure group must act transitively on $\mathbb{S}^{n-2}$, by Corollary 4.2 of~\cite{Brin-Gromov}. Thus, $G_x^\nu$ must be equal to $SO(n-1)$ $\mu_L$-a.e., and hence by Lemma~\ref{erglemma} the frame flow must be ergodic. \qed

\subsection{Proof of Theorem~\ref{heisenberg}} 

The bundle $M \to \mathbb{T}^2$ is a fiber bundle with structure group $\mathbb{S}^1$.  Any automorphism of $M$ automatically preserves the $\mathbb{S}^1$ fibers, acts by volume preserving diffeomorphisms on both $M$ and $\mathbb{T}^2$, and hence has eigenvalue 1 in the (invariant) fiber direction. Thus $f$ acts by isometries on the fibers.  By assumption $f$ induces an Anosov diffeomorphism on the base torus $\mathbb{T}^2$. 
Let $\varphi: M \rightarrow \mathbb{R}$ be a H\"older continuous potential function that is constant on the fibers of $M \to \mathbb{T}^2$, and let $m$ be an equilibrium measure for $(f,\varphi)$. 
Then the arguments from the case of frame flows apply verbatim and give us the dichotomy: either $m$ is invariant under the structure group $\mathbb{S}^1$, and we can understand the equilibrium state via the base torus $\mathbb{T}^2$, or there is a continuous invariant section of the fibre bundle $M \to \mathbb{T}^2$ or a finite cover. In the first case, $m$ is uniquely determined since the equilibrium measure of an Anosov diffeomorphism on $\mathbb{T}^2$ is unique (Theorem 4.1 of~\cite{Bowen}). The second case implies that for a finite cover $\bar{M}$ of $M$, $\pi _1 (\bar{M}) = \mathbb{Z} \times \mathbb{Z} ^2$, which is a impossible since $Heis(\mathbb{Z})$ does not contain a rank $3$ abelian subgroup.

\section{Ergodicity of the frame flow, revisited}\label{sec:reproof}

In this section, we revisit a theorem of Brin and Gromov on the ergodicity of frame flows on negatively curved manifolds in odd dimensions~\cite{Brin-Gromov}. Recall that there is a natural smooth invariant measure $\mu$ for the frame flow on the frame bundle which projects to the Liouville measure $\mu_L$ on the unit tangent bundle and is invariant under the structure group $SO(n-1)$ for the bundle $FM \rightarrow SM$. 

\begin{theorem}[Brin-Gromov, \cite{Brin-Gromov}] \label{thm:BGsec5}
Let $M$ be an odd dimensional closed, oriented manifold of negative sectional curvature and dimension $n \neq 7$. Then the frame flow is ergodic.  
\end{theorem}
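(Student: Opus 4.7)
The plan is to adapt the dichotomy from the proof of Theorem~\ref{main}, but applied to the natural smooth invariant measure $\mu = \mu_L \times \lambda_{SO(n-1)}$ rather than to an equilibrium measure. First I would suppose for contradiction that $F^t$ is not ergodic with respect to $\mu$ and choose an ergodic component $\nu \ll \mu$. Since $\pi_*\mu = \mu_L$, the projection $\pi_*\nu$ is a $g^t$-invariant probability measure absolutely continuous with respect to Liouville measure. Because the geodesic flow is Anosov and Liouville measure is ergodic, this forces $\pi_*\nu = \mu_L$, so I can disintegrate $\nu = \int_{SM} \nu_x \, d\mu_L(x)$ along the fibers of $FM \to SM$.

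Next, I would apply Lemma~\ref{clsubgrpsupp} to the ergodic measure $\nu$: for $\mu_L$-a.e.\ $x$ and $\nu_x$-a.e.\ $\xi \in \pi^{-1}(x)$, the support of $\nu_x$ equals the orbit $G_x^\nu \xi$. Ergodicity of $\nu$ makes the conjugacy class of $G_x^\nu$ constant, so after fixing a trivialization I may treat $G_x^\nu$ as a single closed subgroup $H \leq SO(n-1)$. This yields the dichotomy: either $H = SO(n-1)$, in which case every $\nu_x$ is Haar on the fiber, so that $\nu = \mu_L \times \lambda_{SO(n-1)} = \mu$ and the assumed non-trivial ergodic decomposition collapses; or $H$ is a proper closed subgroup, in which case the assignment $x \mapsto \supp \nu_x$ defines an $F^t$-invariant measurable section $\sigma: SM \to FM/H$.

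The second case I would close via the same topological argument as in Section~\ref{sec:ProofThm1}. Liouville measure is an equilibrium state for a H\"older potential of the Anosov geodesic flow and therefore has local product structure, so Theorem~\ref{sectionregularity} upgrades $\sigma$ to a H\"older continuous section on a full-measure subset; density and uniform continuity then extend it to a continuous section on all of $SM$. Such a section is a reduction of the structure group of $FM \to SM$ from $SO(n-1)$ to $H$, and restricting to a fiber $S_pM \cong \mathbb{S}^{n-1}$ contradicts the Brin--Gromov topological obstructions: Corollary 4.2 of~\cite{Brin-Gromov} forces the structure group over $\mathbb{S}^{n-1}$ to act transitively on $\mathbb{S}^{n-2}$ when $n$ is odd, while Proposition 5.1 of~\cite{Brin-Gromov} forbids a proper subgroup $H < SO(n-1)$ from acting transitively on $\mathbb{S}^{n-2}$ once $n \neq 7$.

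The main obstacle I anticipate is ensuring that the groups $G_x^\nu$ can be treated as a single fixed subgroup in a measurably consistent way: only the conjugacy class is constant a priori, and the ambiguity in choosing a base point in each fiber may force the section to land in $FM/N(H)$ or in a suitable finite cover, analogous to the $\Sigma_m$-quotient trick used in the three-dimensional case of Theorem~\ref{main}. Handling this carefully so that the section regularity theorem applies to the correct associated bundle is where the technical work lies; once a continuous reduction of the structure group over $\mathbb{S}^{n-1}$ is in hand, the Brin--Gromov topological result closes the argument immediately.
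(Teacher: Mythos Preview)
Your proposal is correct and follows essentially the same route as the paper: take an ergodic component $\nu$ of $\mu$, show $\pi_*\nu=\mu_L$, apply Lemma~\ref{clsubgrpsupp} to get the $G_x^\nu$-dichotomy, and in the proper-subgroup case use Theorem~\ref{sectionregularity} on the section $x\mapsto\supp\nu_x$ to obtain a continuous reduction of structure group, which Brin--Gromov's topological results rule out for odd $n\neq 7$. The obstacle you anticipate is dispatched in the paper exactly as you suspect it should be: since the frame flow commutes with the right $SO(n-1)$-action, the conjugation $G_{g^tx}^\nu=F^tG_x^\nu F^{-t}$ identifies the groups along an orbit as literally the same subgroup of $SO(n-1)$, and ergodicity of $\mu_L$ then makes this subgroup constant $\mu_L$-a.e., so no passage to $FM/N(H)$ or a finite cover is needed.
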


First, we characterize the ergodicity of the frame flow in terms of the groups $G_x^\mu$.

\begin{lemma}\label{erglemma} 
The frame flow $F^t: FM \rightarrow FM$ is ergodic if and only if for any ergodic component $\nu$ of $\mu$, $G_x^\nu = SO(n-1)$ for $\mu_L$-a.e. $x \in SM$.
\end{lemma}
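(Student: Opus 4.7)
The plan is to exploit two facts: first, that the canonical measure decomposes as $\mu = \mu_L \times \lambda_{SO(n-1)}$, so its conditional measures along fibers are Haar; and second, that the geodesic flow $g^t$ on $(SM,\mu_L)$ is ergodic because it is Anosov. Throughout, note that the condition $\phi(\xi g) = \phi(\xi)g$ in the definition of $G_x^\mu$ forces elements of $G_x^\mu$ to commute with the right $V$-action on the fiber $V_x \cong SO(n-1)$; hence $G_x^\mu$ is automatically a closed subgroup of the group of left translations, which we identify with $SO(n-1)$.

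For the forward direction, assume $F^t$ is ergodic, so $\mu$ itself is its unique ergodic component. Since $\mu = \mu_L \times \lambda_{SO(n-1)}$, the conditional measure $\mu_x$ is Haar on $V_x$ for $\mu_L$-a.e. $x$, and in particular $\supp\mu_x = V_x$. Lemma~\ref{clsubgrpsupp} then gives $G_x^\mu \xi = V_x$ for $\mu_x$-a.e. $\xi$, so $G_x^\mu \subseteq SO(n-1)$ acts transitively on $V_x$ by left translations, forcing $G_x^\mu = SO(n-1)$ for $\mu_L$-a.e. $x$.

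For the reverse direction, take the ergodic decomposition $\mu = \int \nu\, d\tau(\nu)$ of $\mu$ under $F^t$. Each $\nu$ is $F^t$-ergodic, and because $\pi \circ F^t = g^t \circ \pi$, the pushforward $\hat{\nu} = \pi_*\nu$ is $g^t$-invariant and $g^t$-ergodic (any $g^t$-invariant set pulls back to an $F^t$-invariant set of $\nu$-measure $0$ or $1$). Pushing the decomposition down yields $\mu_L = \int \hat{\nu}\, d\tau(\nu)$; since $g^t$ is ergodic with respect to $\mu_L$, uniqueness of the ergodic decomposition gives $\hat{\nu} = \mu_L$ for $\tau$-a.e. $\nu$. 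By hypothesis $G_x^\nu = SO(n-1)$ on a set of full $\mu_L$-measure, hence on a set of full $\hat{\nu}$-measure. Applying Lemma~\ref{clsubgrpsupp} to the ergodic measure $\nu$, we see that $\supp\nu_x = G_x^\nu \xi = V_x$ and $\nu_x$ is left-$SO(n-1)$-invariant, hence equal to Haar measure $\lambda_x$. Therefore
\[
\nu = \int \nu_x\, d\hat{\nu}(x) = \int \lambda_x\, d\mu_L(x) = \mu,
\]
so the ergodic decomposition is trivial and $\mu$ is ergodic.

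The only nontrivial step is verifying that $\hat{\nu} = \mu_L$ for $\tau$-a.e. ergodic component $\nu$; everything else is an immediate application of Lemma~\ref{clsubgrpsupp} together with the observation that the constraint defining $G_x^\nu$ pins it inside $SO(n-1)$ acting by left translations. I do not anticipate a serious obstacle.
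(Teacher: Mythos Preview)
Your proof is correct and follows essentially the same approach as the paper's: in both directions you use that $\pi_*\nu = \mu_L$ (via ergodicity of the Anosov geodesic flow) and then combine Lemma~\ref{clsubgrpsupp} with the identification of $G_x^\nu$ as a subgroup of left translations. The paper phrases the forward direction as ``transitive and free implies the whole group,'' while you observe directly that a transitive group of left translations is all of $SO(n-1)$; these are the same argument, and your version is slightly more explicit about the ergodic decomposition machinery.
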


\begin{proof} 
Consider an ergodic component $\nu$ of $\mu$ for the frame flow, and suppose $G_x^\nu = SO(n-1)$ for $\mu_L$-almost every $x \in SM$. Since the Liouville measure $\mu_L$ on $SM$ is ergodic for the geodesic flow and $\mu$ projects to $\mu_L$, the measure $\nu$ also projects to $\mu_L$. By definition of $G_x^\nu$, each conditional measure $\nu_x$ is invariant under $G_x^\nu$. Since $G_x^\nu = SO(n-1)$ for $\mu_L$-a.e. $x$, the measure $\nu_x$ must be Haar measure for $\mu_L$-a.e. $x$. Thus, $\nu = \mu$.

Conversely, if $\mu$ is ergodic for the frame flow, then $G_x^\mu$ acts transitively on $\pi^{-1}(x) \approx SO(n-1)$ for $\mu_L$-a.e. $x$ by Lemma~\ref{clsubgrpsupp}. Since $G^\mu_x$ commutes with the right action of $SO(n-1)$, this action is also free, since any element $\alpha \in G^\mu_x$ that fixes the identity $e$ satisfies
\[
\alpha(g) = \alpha(e\cdot g) = \alpha(e) \cdot g = e \cdot g = g.
\]
Hence, $G_x^\mu = SO(n-1)$ for $\mu_L$-a.e. $x$.
\end{proof}

Thus, to prove Theorem~\ref{thm:BGsec5} we need only show that, for an arbitrary ergodic component $\nu$ of $\mu$, $G_x^\nu$ must be $SO(n-1)$. Toward that end, suppose that $G_x^\nu \not= SO(n-1)$. Then the following lemma shows that there is a non-trivial reduction of the structure group $SO(n-1)$.

\begin{lemma}
Let $\nu$ be an ergodic component of $\mu$. If $G^\nu_x$ is a proper subgroup of $SO(n-1)$ $\mu_L$-a.e., then there is a non-trivial reduction of the structure group of $FM \rightarrow SM$. 
\end{lemma}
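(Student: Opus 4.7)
The plan is to mimic the structure-group-reduction argument from the higher-dimensional proof of Theorem~\ref{main} (Section~\ref{sec:ProofThm1}), but now for the given ergodic component $\nu$ in place of an equilibrium measure. The three steps are: (i) construct a measurable $g^t$-invariant section of an associated bundle $FM/H \to SM$ that records $\supp \nu_x$; (ii) upgrade it to a continuous section via Theorem~\ref{sectionregularity}; and (iii) pull back along this section to extract a non-trivial reduction of $FM \to SM$.

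For (i), first note that since $\mu_L$ is $g^t$-ergodic and $\pi_*\mu = \mu_L$, the projection of every ergodic component of $\mu$ must equal $\mu_L$; hence $\hat{\nu} := \pi_*\nu = \mu_L$. Lemma~\ref{clsubgrpsupp} applied to the ergodic measure $\nu$ then yields $\supp \nu_x = G_x^\nu \cdot \xi$ in $\pi^{-1}(x)$ for $\mu_L$-a.e.\ $x$. Because $F^t$ sends fibers to fibers by isometries that commute with the right $SO(n-1)$-action, in any local trivialization $G_{g^t x}^\nu$ is conjugate in $SO(n-1)$ to $G_x^\nu$; the map sending $x$ to the $SO(n-1)$-conjugacy class of $G_x^\nu$ is therefore $g^t$-invariant. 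By ergodicity of $(g^t,\mu_L)$, this class is constant $\mu_L$-almost everywhere, and by hypothesis its representative $H \lneq SO(n-1)$ is a proper subgroup. Each $\supp \nu_x$ is then a single right $H$-coset in $\pi^{-1}(x)$, i.e.\ a point of $FM/H$, and I set $\sigma(x) := [\supp \nu_x]$. This $\sigma$ is measurable (the Rokhlin disintegration $x \mapsto \nu_x$ is) and $g^t$-invariant (by $F^t$-invariance of $\nu$ and the equivariance $F^t_* \nu_x = \nu_{g^t x}$).

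For (ii), apply Theorem~\ref{sectionregularity} with principal bundle $FM \to SM$, structure group $SO(n-1)$, action by $g^t$ (which is Anosov on $SM$), base measure $m = \mu_L$ (which has local product structure), and transitive left $SO(n-1)$-space $V = SO(n-1)/H$, which admits an $SO(n-1)$-invariant metric induced from any bi-invariant metric on $SO(n-1)$. The theorem gives that $\sigma$ coincides $\mu_L$-a.e.\ with a H\"older continuous section; since $\mu_L$ has full support on the compact manifold $SM$, this H\"older section extends uniquely to a continuous global section $\tilde\sigma: SM \to FM/H$. For (iii), because $FM \to FM/H$ is a principal $H$-bundle, the pullback $FM_H := \tilde\sigma^*(FM)$ is a principal $H$-subbundle of $FM$ over $SM$, giving a reduction of the structure group of $FM \to SM$ from $SO(n-1)$ to the proper subgroup $H$, which is exactly the required non-trivial reduction.

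The main obstacle I anticipate is the conjugacy-class step in (i): $G_x^\nu$ is intrinsically a subgroup of $\Isom \pi^{-1}(x)$, and one must check carefully that its $SO(n-1)$-conjugacy class is trivialization-independent, measurable in $x$, and $g^t$-invariant, so that ergodicity pins it down to a single $H$. Once the constancy of $H$ is in hand, the remaining work is essentially bookkeeping plus a direct invocation of Theorem~\ref{sectionregularity} and the pullback construction, paralleling Section~\ref{sec:ProofThm1}.
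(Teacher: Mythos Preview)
Your proposal is correct and follows essentially the same route as the paper: establish that $G_x^\nu$ is (up to the conjugacy subtlety you flag) a fixed proper subgroup $H$ almost everywhere by ergodicity of $(g^t,\mu_L)$, define the measurable invariant section $\sigma: SM \to FM/H$ via $x \mapsto \supp \nu_x$, upgrade it to a continuous global section using Theorem~\ref{sectionregularity}, and pull back along $\tilde\sigma$ to obtain the principal $H$-subbundle. The paper's proof is terser---it asserts directly that $G_{g^t x}^\nu = F^t G_x^\nu F^{-t}$ is ``the same group'' along orbits (using that $F^t$ commutes with the right $SO(n-1)$-action) rather than passing through conjugacy classes---but the content and the invocation of Theorem~\ref{sectionregularity} are identical, and your explicit flagging of the trivialization-dependence issue is, if anything, more careful than the paper's treatment.
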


\begin{proof}
We find a reduction of the structure group by utilizing a natural one-to-one correspondence between reductions of the structure group $SO(n-1)$ to a subgroup $H$ and continuous global sections of the fiber bundle $FM/H \rightarrow SM$ (described in Section~\ref{sec:ProofThm1}).

We need to produce such a subgroup $H$. The obvious candidate is $G_x^\nu$, but a priori this group depends on $x$; we will show that $G_x^\nu$ is the same subgroup for a set of full $\hat{\mu}$ measure. Along an orbit of the geodesic flow, we have $G_{g^t x}^\nu = F^t G_x^\nu F^{-t}$. Since the $SO(n-1)$ action on the fibers commutes with the frame flow, this identification gives the same group along an orbit. By ergodicity of the geodesic flow, $G_x^\nu$ must be the same group inside $SO(n-1)$ $\mu_L$-almost everywhere. This gives a measurable section $\sigma: SM \rightarrow FM/H$ taking $x$ to the support of $\mu_x$ on this set of full $\mu_L$ measure. By Theorem~\ref{sectionregularity}, we can extend this to a continuous, global section $SM \rightarrow FM/H$. By the correspondence described above, this gives a reduction of the structure group to $H$, which by assumption is a proper subgroup of $SO(n-1)$.
\end{proof}

A non-trivial reduction of the structure group of $FM \rightarrow SM$ also gives a non-trivial reduction of the structure group of the restricted bundle $FM \rightarrow S_pM \approx \mathbb{S}^{n-1}$. By arguments of Brin and Gromov~\cite{Brin-Gromov}, this implies that $G_x^\nu$ must be equal to $SO(n-1)$ $\mu_L$-a.e. Hence, by Lemma~\ref{erglemma}, the frame flow is ergodic.

\bibliographystyle{abbrv}
\bibliography{EMIEAS.bib}

\end{document}